\documentclass[12pt,a4paper]{article}
\usepackage[top=2.5cm,bottom=2.5cm,left=2.5cm,right=2.5cm]{geometry}

\usepackage{amssymb,amsmath,graphicx,color,amsthm,centernot,enumerate}
\usepackage[capitalise]{cleveref}
\usepackage[labelformat=simple]{subcaption}
\usepackage[font=small,labelfont=bf,width=12.5cm]{caption}

\usepackage{multirow,tabularx}

\newtheorem{theorem}{Theorem}[section]
\newtheorem{lemma}[theorem]{Lemma}
\newtheorem{corollary}[theorem]{Corollary}

\newtheorem{observation}{Observation}
\newtheorem{conjecture}[theorem]{Conjecture}

\newcommand{\set}[1]{\ensuremath{\left\{#1 \right\}}}

\newcommand{\out}[1]{\ensuremath{\mathrm{out}(#1)}}

    {\noindent \emph{Proof.} {}{#1}{}}{$~$\hfill $~\blacklozenge$ \vspace{0.2cm}}

\definecolor{defblue}{rgb}{0.4,0,0.84}
\definecolor{greyblue}{rgb}{0.23,0.4,0.70}
\definecolor{orange}{rgb}{1.0,0.5,0.2}
\definecolor{violet}{rgb}{0.55,0,0.55}

\usepackage{url}
\makeatletter
\g@addto@macro{\UrlBreaks}{\UrlOrds}
\makeatother

\newcolumntype{Y}{>{\centering\arraybackslash}X}

\begin{document}

\title{{\bf Degree-balanced decompositions \\ of cubic graphs}}

\author
{
	Borut Lu\v{z}ar\thanks{Faculty of Information Studies in Novo mesto, Slovenia.} \thanks{Rudolfovo Institute, Novo mesto, Slovenia.} \quad
	Jakub Przyby{\l}o\thanks{AGH University of Krakow, 
 al. A. Mickiewicza 30, 30-059 Krakow, Poland.} \quad
	Roman Sot\'{a}k\thanks{Pavol Jozef \v Saf\'{a}rik University, Faculty of Science, Ko\v{s}ice, Slovakia.} 	
}

\maketitle

{
\begin{abstract}
	We show that every cubic graph on $n$ vertices contains a spanning subgraph
	in which the number of vertices of each degree
 deviates from $\frac{n}{4}$ by at most $\frac{1}{2}$,
	up to three exceptions.
	This resolves the conjecture of Alon and Wei 
	({\em Irregular subgraphs, Combin. Probab. Comput. 32(2) (2023), 269--283})
	for cubic graphs.
\end{abstract}
}

\medskip
{\noindent\small \textbf{Keywords:} irregular subgraph, repeated degrees, 
degree-balanced decomposition.}

\section{Introduction}

In this paper, we only consider simple graphs (i.e., graphs with no loops and no parallel edges).
Given a graph $G$ and a nonnegative integer $k$,
we denote by $m(G,k)$ the number of vertices of degree $k$ in $G$,
and by $m(G)$ the maximum number of vertices of the same degree in $G$.

Recently, Alon and Wei~\cite{AloWei23} considered the problem of searching for a spanning subgraph $H$ of a $d$-regular graph $G$, 
for which $m(H)$ is the smallest possible. 
Clearly, the lower bound for this value is $m(H) \ge \lceil\frac{n}{d+1}\rceil$.
The authors of~\cite{AloWei23} suspect that the best general upper bound for such $m(H)$ is however very close to the same quantity.
More specifically, 
they proposed the following two conjectures.
\begin{conjecture}[Alon \& Wei~\cite{AloWei23}]
	\label{conj:regular}
	Every $d$-regular graph $G$ on $n$ vertices contains a spanning subgraph $H$ 
	such that for every $k$, $0 \le k \le d$, 
	$$
		\left | m(H,k) - \frac{n}{d+1} \right | \le 2\,.
	$$
\end{conjecture}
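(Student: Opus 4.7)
The plan is to combine a random linear ordering of $V(G)$, which gives the correct \emph{marginal} distribution on vertex degrees, with a deterministic $f$-factor realization and a local surgery step forcing the pointwise constant deviation asserted by the conjecture.

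First, I would choose a permutation $\sigma : V(G) \to [n]$ uniformly at random and set $t(v) = |\{u \in N(v) : \sigma(u) > \sigma(v)\}|$ for each vertex $v$. Since the rank of $v$ within $\{v\} \cup N(v)$ is uniform on $\{1, \dots, d+1\}$, one obtains $\Pr[t(v) = k] = \frac{1}{d+1}$ for every $k \in \{0, \dots, d\}$, and in particular $\mathbb{E}[|\{v : t(v) = k\}|] = \frac{n}{d+1}$ for each $k$. Moreover $\sum_v t(v) = |E(G)| = \frac{nd}{2}$, matching the edge sum that any subgraph realizing $t$ as its degree sequence would need.

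Next I would try to realize $t$ as the degree sequence of a genuine spanning subgraph $H$ of $G$. An orientation does not directly yield such a subgraph, so the natural tool is Tutte's $f$-factor theorem: $G$ has an $f$-factor with $f = t$ iff Tutte's deficiency inequality holds for every pair of disjoint $S, T \subseteq V(G)$. The argument would be that, after flipping $\sigma$ at a constant number of vertices to fix the parity of $\sum t(v)$ and any residual violations of the deficiency inequality, the adjusted $t$ passes Tutte's test with positive probability, yielding an $H$ with $\deg_H(v) = t(v)$.

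The last and genuinely hard step is to boost the bound on $|m(H,k) - n/(d+1)|$ from the $O(\sqrt{n \log n})$ given by a McDiarmid-type martingale on $\sigma$ down to the absolute constant $2$ of the conjecture. To close this gap, introduce an exchange graph on candidate realizations: two subgraphs are adjacent if one is obtained from the other by swapping edges along an alternating cycle of $G$, a move that shifts exactly two vertices between degree classes. A potential-function argument on $\Phi(H) = \sum_k \left( m(H,k) - \tfrac{n}{d+1} \right)^2$ along this exchange graph should reduce the total imbalance one step at a time, ultimately forcing every class within $2$ of $n/(d+1)$.

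The main obstacle is precisely this last sharpening. Required alternating cycles need not exist locally in $G$ — near bridges, in tree-like regions, or when short cycles are already saturated by $H$ — so the exchange argument will inevitably stall in the presence of local obstructions. Classifying and neutralizing these obstructions appears to demand structural arguments tailored to the value of $d$, in the spirit of the cubic proof given in the present paper, which is why the conjecture remains genuinely open for $d \geq 4$: probabilistic tools deliver the correct average and $O(\sqrt n)$ concentration cheaply, but pushing the deviation all the way down to a constant seems to be an irreducibly combinatorial task.
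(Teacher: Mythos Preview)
Note first that the paper does \emph{not} prove this conjecture in general; it establishes only the case $d=3$, via an entirely deterministic, constructive argument (building $V_3$, then $V_2$, then $V_1$ in stages with careful case analysis), so there is no general proof here to compare against and your outline is really an attack on an open problem. As such it has genuine gaps beyond the one you concede at the end. In Step~2 you speak of ``flipping $\sigma$ at a constant number of vertices to fix the parity of $\sum t(v)$,'' but $\sum_v t(v) = |E(G)| = nd/2$ is deterministic and independent of $\sigma$; when $nd/2$ is odd (e.g.\ $d=3$, $n=6$) no subgraph can realize $t$ exactly, and you must perturb $t$ itself, not $\sigma$. More seriously, you invoke Tutte's $f$-factor theorem but give no argument that the deficiency condition holds, even with positive probability after local repairs --- verifying Tutte's condition for a random $f$ on an arbitrary $d$-regular host is far from automatic.

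Step~3 you correctly flag as the real obstruction, and you essentially admit the alternating-cycle exchange argument stalls. That is honest, but it means the proposal is not a proof: concentration for the random permutation yields deviations of order $\sqrt{n}$, not $O(1)$, and nothing in your outline bridges that gap. The paper's cubic argument sidesteps both the $f$-factor realizability issue and the sharpening step by never using randomness at all, constructing the subgraph greedily with the exact target degree profile from the start.
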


\begin{conjecture}[Alon \& Wei~\cite{AloWei23}]
	\label{conj:general}
	Every graph $G$ on $n$ vertices and minimum degree $\delta$ contains a spanning subgraph $H$ 
	satisfying 
	$$
		m(H) \le \frac{n}{\delta+1} + 2\,.
	$$
\end{conjecture}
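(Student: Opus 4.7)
The plan is to combine a probabilistic choice of target degrees with a deterministic realization step. The motivating observation is that in any spanning subgraph $H$, every vertex $v$ can take any degree in $\{0,1,\ldots,\delta\}$ (since $d_G(v)\ge \delta$), giving $\delta+1$ options per vertex. Thus, sampling a uniform random target $t_v\in\{0,1,\ldots,\delta\}$ for each vertex yields an expected $n/(\delta+1)$ vertices at every target value, which matches the conjectured bound up to additive lower-order terms.

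First, independently for each $v$, sample $t_v$ uniformly in $\{0,1,\ldots,\delta\}$; a standard Chernoff bound then gives $|\{v:t_v=k\}|=n/(\delta+1)+O(\sqrt{(n/(\delta+1))\log n})$ for every $k$ with high probability. Second, construct a spanning subgraph $H$ with $\deg_H(v)=t_v$ for every $v$. This is a $b$-factor existence problem, which may fail for parity or Tutte-type reasons, but one should be able to perturb $t_v$ by $\pm 1$ on only a small number of vertices to restore feasibility, using Tutte's $b$-factor theorem together with a flow or alternating-path argument. Third, tighten the additive error down to the required $+2$ via a local-swap post-processing step: whenever some degree value $k$ contains more than $n/(\delta+1)+2$ vertices, find a vertex $v$ of that degree together with an adjacent edge whose addition or removal moves $\deg_H(v)$ to $k\pm 1$ without pushing any other degree class above the threshold, and argue that such moves always exist and reduce a suitable potential.

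The main obstacle is the last step: bridging the Chernoff $\sqrt{\cdot}$ gap down to an additive constant. The probabilistic sampling is routine, and the $b$-factor realization should be tractable via classical matching and flow arguments, but the local-swap tightening is genuinely delicate, because degree classes that are already near the threshold $n/(\delta+1)+2$ cannot absorb any redistributed vertices, so one must argue simultaneously about the joint structure of all near-threshold classes. A secondary difficulty is that the Chernoff deviations on distinct degree classes are negatively correlated rather than independent, so ruling out the worst-case configurations in which many classes simultaneously overshoot requires careful use of this correlation structure. This is presumably why the conjecture remains open in general, and why the present paper restricts attention to the cubic case ($\delta=3$) of the regular analog (Conjecture~\ref{conj:regular}), where explicit structural case analysis replaces the local-swap step.
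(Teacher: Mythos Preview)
The statement you are attempting to prove is Conjecture~\ref{conj:general}, which is an \emph{open conjecture} of Alon and Wei; the paper does not prove it and contains no proof to compare against. The paper's contribution is Theorem~\ref{thm:main}, which settles the related Conjecture~\ref{conj:regular} in the special case $d=3$ by a purely constructive, case-based argument (Lemma~\ref{thm:connected} and Theorem~\ref{thm:general}), not by any probabilistic method.

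Your proposal is not a proof but a programme, and you yourself flag the fatal gap: the sampling step only yields deviations of order $\sqrt{(n/(\delta+1))\log n}$, and nothing in the outline closes this down to the additive constant~$2$. The ``local-swap post-processing'' is where the entire content of the conjecture lies, and the sentence ``argue that such moves always exist and reduce a suitable potential'' is precisely the statement that needs proving. There is no reason to expect such swaps to exist in general: a single edge flip changes two vertex degrees simultaneously, so moving one vertex out of an overfull class may push its neighbour into another overfull class, and no potential function or termination argument is supplied. The $b$-factor step is also not justified: Tutte's condition can fail for the sampled targets, and ``perturb $t_v$ by $\pm 1$ on only a small number of vertices'' is asserted without bounding how small, or why feasibility can be restored with $O(1)$ perturbations rather than $\Theta(n)$. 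In short, the proposal correctly identifies why the problem is hard but does not resolve it; the conjecture remains open.
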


These conjectures thus address the quest for a highly irregular subgraph $H$ in a given graph $G$, that is a subgraph with degrees nearly as diversified as possible, 
almost as much as degrees in the host graph $G$ permit.
Alon and Wei~\cite{AloWei23} managed to prove
the conjectures up to constant multiplicative factors.
Namely, 
they assured the existence of spanning subgraphs $H$ with $m(H) \le 8\frac{n}{d} + 2$ and  $m(H) \le 16\frac{n}{\delta} + 4$ 
in all $n$-vertex $d$-regular graphs and graphs with minimum degree $\delta>0$, respectively.
They also confirmed Conjecture~\ref{conj:regular} asymptotically for small enough $d$ compared to $n$, showing that each $n$-vertex $d$-regular graph with 
$d  = o((n/\log n)^{1/3})$
contains a spanning subgraph $H$ with $m(H,k) = (1 + o(1))\frac{n}{d+1}$,
for every $0 \le k \le d$.
Later, Fox, Luo, and Pham~\cite{FoxLuoPha24}  improved this result, by significantly extending the range of $d$ for which its statement holds, 
i.e., towards all $d = o(n/(\log n)^{12})$.
Finally, Ma and Xie~\cite{MaXie24} provided the first bound for Conjecture~\ref{conj:regular} independent of $n$.
\begin{theorem}[Ma \& Xie~\cite{MaXie24}]
	Every $d$-regular multigraph $G$ on $n$ vertices contains a spanning subgraph $H$ 
	such that for every $k$, $0 \le k \le d$, 
	$$
		\left | m(H,k) - \frac{n}{d+1} \right | \le 2d^2\,.
	$$	
\end{theorem}
For general $n$-vertex graphs with $\delta>0$, Alon and Wei~\cite{AloWei23} provided spanning subgraphs $H$ with $m(H) \le (1+o(1)) \lceil \frac{n}{\delta+1} \rceil + 2$ 
and confirmed Conjecture~\ref{conj:general} up to additive factor $1$ for sufficiently large $n$ with $\delta^{1.24}\geq n$.

Apart from Ma and Xie, who used a novel deterministic approach to prove their result,
the above-mentioned asymptotic approximations of Conjectures~\ref{conj:regular} and~\ref{conj:general}
were based on an analysis of a randomized procedure of choosing $H$, 
exploited earlier, e.g., in~\cite{Frieze,AsymptoticIrregStrReg,PrzybyloWeiLong,PrzybyloWeiShort} 
to investigate a related concept, the so-called irregularity strength of graphs.
Within this problem, instead of looking for
a subgraph with limited frequency of every potential degree, one strives to assure at most one vertex of each degree.
This is achieved either by multiplying the edges of the host graph $G$ or equivalently weighting its edges with positive integers and considering weighted degrees. 
The least $k$ admitting such a weighting with the maximum weight $k$ is called the \emph{irregularity strength} of $G$ and denoted $s(G)$. 
This notion was introduced 
in~\cite{ChaJacLehOelRuiSab88} in the 80s,
along with the key open problem 
of the related field
from~\cite{FauLeh87}, 
which asserts that $s(G)\le \frac{n}{d}+C$ for every $d$-regular graph $G$ of order $n$,
that is just a constant above a trivial lower bound.
See, e.g.,~\cite{PrzybyloWeiLong} for an extensive list of references to papers devoted to studying this conjecture. 
These, as well as the main objective of our paper, are related with problems concerning the so-called degree-constrained subgraphs, 
see, e.g.,~\cite{AddDalRee08} (and also~\cite{AddAldDalRee05,AddDalMcDReeTho07}), 
whose results were in particular exploited by Alon and Wei in~\cite{AloWei23}. 
In the same paper, they also established several more direct relations 
between the irregularity strength of graphs and the existence 
of almost irregular subgraphs we focus on.

In this paper, instead of asymptotics, we investigate exact bounds 
for $3$-regular (i.e., {\em cubic}) graphs.
In particular, we prove that Conjecture~\ref{conj:regular} itself holds for such graphs, taking the first modest step towards solving 
the conjecture in its literal form. 
It was Alon and Wei~\cite{AloWei23} who observed that one cannot replace the constant $2$ in Conjecture~\ref{conj:regular} with $1$, as exemplifies, e.g.,
the graph comprised of 
two components isomorphic to the cycle of length $4$.
They however suspected that such a strengthening might be possible for every $d$ up to a limited number of small exceptions.
Such strengthening of Conjecture~\ref{conj:regular}  
for $d=3$ is exactly implied by our main result below, which is even slightly stronger, and in face of the mentioned trivial lower bound is optimal for all cubic graphs.
\begin{theorem}
	\label{thm:main}
	Every cubic graph $G$ on $n$ vertices, not isomorphic to $K_4$, $K_{3,3}$, 
	or $3K_4$, contains a spanning subgraph $H$ 
	such that for every $k$, $0 \le k \le 3$, 
	$$
        m(H,k) \in \left\{\left\lfloor\frac{n}{d+1}\right\rfloor,
        \left\lceil\frac{n}{d+1}\right\rceil\right\}.
	$$
\end{theorem}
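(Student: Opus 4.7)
I would split the argument into three stages: first identify the admissible target degree tuples, then reduce to bridgeless cubic graphs, and finally construct $H$ using a $2$-factor decomposition.

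Write $n = 4q + r$ with $r \in \{0, 1, 2, 3\}$. An admissible target $(a_0, a_1, a_2, a_3)$ must satisfy $a_i \in \{q, q+1\}$, $\sum_i a_i = n$, and the parity condition $a_1 + a_3 \equiv 0 \pmod{2}$ (since $\sum_v \deg_H(v) = a_1 + 2a_2 + 3a_3 = 2|E(H)|$ must be even). A quick enumeration shows that such tuples exist for every $n$ --- exactly one when $r = 0$ and two when $r \in \{1, 2, 3\}$. The goal is to realize at least one of them as the degree sequence of a spanning subgraph $H \subseteq G$.

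If $G$ has a bridge $uv$, I would split $G$ along it, attach a small cubic gadget to each side to restore $3$-regularity, and apply the inductive hypothesis to each side, choosing target distributions on the two halves whose union is admissible for $G$. In the bridgeless case, Petersen's theorem supplies a perfect matching $M$, so $G - M$ is a $2$-factor: a vertex-disjoint union of cycles $C_1, \ldots, C_t$. I would set $H = M' \cup \bigcup_i F_i$ with $M' \subseteq M$ and $F_i \subseteq E(C_i)$, giving $\deg_H(v) = [v \in V(M')] + \deg_{F_i}(v) \in \{0, 1, 2, 3\}$. The main technical lemma should describe the flexibility of cycle contributions: removing $s$ pairwise disjoint edges from $C_i$ yields $2s$ cycle-degree-$1$ vertices among otherwise cycle-degree-$2$ vertices, while removing pairs of consecutive edges converts cycle-degree-$1$ vertices into cycle-degree-$0$ ones. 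Combined with the binary choice of inclusion for each matching edge, this should give enough flexibility to hit some admissible target.

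The main obstacle will be the coupling between matching and cycle contributions at each vertex: a final degree of $3$ or $0$ demands all three incident edges to be included or all excluded simultaneously, and the cyclic constraint on $F_i$ sharply restricts the attainable multisets when $C_i$ is short. I expect the crux to be a careful case analysis on the cycle-length multiset of the $2$-factor, supplemented by switching to a different perfect matching via alternating cycles whenever the initial choice cannot hit any admissible target. The three exceptional graphs $K_4$, $K_{3,3}$, and $3K_4$ --- for which no admissible target is realizable --- must be carved out of both the base cases and the reduction step, which is the most delicate part of the argument.
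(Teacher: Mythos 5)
Your outline is a plan rather than a proof, and the two places where all the actual work lies are left unargued. The central claim --- that after fixing a perfect matching $M$ (via Petersen) the choices $M'\subseteq M$ and $F_i\subseteq E(C_i)$ give ``enough flexibility to hit some admissible target'' --- is exactly the hard part, and you defer it to an unspecified case analysis on the cycle-length multiset plus switching to other perfect matchings. Petersen's theorem gives no control over the cycle structure of the $2$-factor, hitting the target requires roughly $n/4$ vertices of degree $3$ and $n/4$ of degree $0$, i.e.\ full alignment of the matching choice with the cycle choice at about half the vertices, and short cycles (triangles, $4$-cycles) genuinely obstruct this --- indeed $K_4$ and $K_{3,3}$ show the target can be unreachable, so any such lemma must carry hypotheses that exclude precisely the exceptional graphs, and no argument is offered that nothing else goes wrong. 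As it stands the proposal identifies the crux and stops there.

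The reduction steps also have concrete gaps. For a bridge $uv$, attaching a cubic gadget and invoking induction controls the degree distribution of the \emph{modified} graph, in which the gadget vertices are counted and the degree of $u$ may be realized using the gadget edge, which does not exist in $G$; to make this work you would need a strengthened inductive statement prescribing the exact degree of the bridge endpoint, which you never formulate. Similarly, for disconnected graphs (and implicitly in your splitting step) it is not enough to give each piece a target inside its own $\{\lfloor n_i/4\rfloor,\lceil n_i/4\rceil\}$ window: e.g.\ $2K_4$ is $(2,2,2,2)$-decomposable only because one $K_4$ takes $(0,0,2,2)$ and the other $(2,2,0,0)$, distributions far outside that window. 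This is precisely why the paper strengthens its connected-case lemma to supply the extra, deliberately unbalanced decomposition types $(t-1,t-1,t+1,t+1)$ and $(t-1,t,t+1,t+2)$ before combining components. The paper's route is otherwise quite different from yours --- a direct staged greedy construction on a connected graph (grow the set of degree-$3$ vertices from a shortest cycle, then fix the degree-$2$ and degree-$1$ vertices by local recoloring rules), with no use of perfect matchings or $2$-factors --- and that construction is where it absorbs the difficulties your plan leaves open.
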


We note that Conjecture~\ref{conj:regular} for the case $d=3$ was independently confirmed
by Ma and Xie~\cite{MaXie24}, who employed a refinement of their local adjustments method.
Our result differs from theirs in two aspects. 
We exploit an entirely different technique, which is strictly constructive and yields a 
straightforward algorithm directly generating the desired subgraph.
But more importantly, our result for cubic graphs is stronger than the one in~\cite{MaXie24}, 
which basically confirms Conjecture~\ref{conj:regular} for this graph class, 
i.e., implies an upper bound of $2$ for the achievable maximum deviation of a subgraph degree frequency from $n/4$. 
In turn, we provide a complete solution for this invariant, 
determining the exact value of achievable maximum deviation for every cubic graph,
which, apart from a few exceptional cases, equals $0$ or $1/2$.

\section{Proof of Theorem~\ref{thm:main}}

In this section, we prove the main result of the paper.
We first present some additional terminology and auxiliary observations. 

We say that a $d$-regular graph $G$ is 
{\em $(n_d,n_{d-1},\dots,n_0)$-decomposable}
if there exists a spanning subgraph $H$ of $G$ 
such that $n_i = m(H,i)$, for every $i \in \set{0,\dots,d}$.

Since in the complement $\overline{H}$ of $H$ the degree of a vertex of degree $k$ from $H$ equals $d-k$, 
the following observation immediately follows.
\begin{observation}
	\label{obs:complement}
	If a $d$-regular graph is $(n_d,n_{d-1},\dots,n_0)$-decomposable,
	then it is also $(n_0,n_{1},\dots,n_d)$-decomposable.
\end{observation}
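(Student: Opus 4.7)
The plan is to prove the observation by taking the edge-complement of $H$ inside $G$ and checking that degree $k$ is mapped to degree $d-k$ at every vertex. Concretely, assume $G$ is $(n_d,n_{d-1},\dots,n_0)$-decomposable and fix a witnessing spanning subgraph $H\subseteq G$ with $m(H,i)=n_i$ for every $i\in\{0,\dots,d\}$. Define $H'$ to be the spanning subgraph of $G$ with edge set $E(G)\setminus E(H)$.

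Since $G$ is $d$-regular, each vertex $v$ of $G$ satisfies $\deg_G(v)=d$, and the edges of $G$ at $v$ partition into those in $H$ and those in $H'$, yielding $\deg_{H'}(v)=d-\deg_H(v)$. Hence a vertex has degree $j$ in $H'$ if and only if it has degree $d-j$ in $H$, so $m(H',j)=m(H,d-j)=n_{d-j}$ for every $j\in\{0,\dots,d\}$. Reading off the tuple $(m(H',d),m(H',d-1),\dots,m(H',0))$ gives $(n_0,n_1,\dots,n_d)$, which is precisely the statement that $G$ is $(n_0,n_1,\dots,n_d)$-decomposable.

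There is no real obstacle here: the only thing to be careful about is distinguishing the \emph{edge-complement inside $G$} from the usual graph-theoretic complement in $K_n$, since only the former is guaranteed to be a spanning subgraph of $G$. As long as $H'$ is defined as $(V(G),E(G)\setminus E(H))$ rather than the complement with respect to $K_n$, the degree-flip identity is immediate from $d$-regularity of $G$, and the conclusion follows in one line.
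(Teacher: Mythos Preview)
Your proof is correct and follows exactly the paper's approach: take the complement $H'=(V(G),E(G)\setminus E(H))$ inside $G$ and use $d$-regularity to get $\deg_{H'}(v)=d-\deg_H(v)$. Your remark distinguishing the complement inside $G$ from the complement in $K_n$ is a useful clarification, since the paper's notation $\overline{H}$ could be misread.
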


Given two subsets of vertices $X$ and $Y$ in a graph $G=(V,E)$, 
by $e(X,Y)$ and $e(X)$, we denote the number of edges having one end vertex in $X$ and the other in $Y$,
and having both endvertices in $X$, respectively.
By ${\rm out}(X)$ we denote the number of edges joining $X$ with $V\smallsetminus X$ in $G$. 
Moreover, given an edge-coloring of $G$,
by $e_c(X,Y)$ and $e_c(X)$ we denote the number of edges of color $c$ having one end vertex in $X$ and the other in $Y$,
and having both endvertices in $X$, respectively.
For a vertex $v$, we additionally denote by $N_c(v)$ the set of neighbors $u$ of $v$ in $G$ such that $uv$ is colored $c$.

\subsection{Connected graphs}

In order to handle later graphs with multiple components we first prove 
a strengthening of Theorem~\ref{thm:main} in the connected case,
implying the existence of specific types of decompositions.
\begin{lemma}
	\label{thm:connected}
	Let $G$ be a connected cubic graph on $n$ vertices.
	The following statements hold:
	\begin{enumerate}[(i)]
	\item{} if $n = 4t$ and $G$ is not isomorphic to $K_4$, then $G$ is $(t,t,t,t)$-decomposable;
	\item{} if $n = 4t$, then $G$ is $(t-1,t-1,t+1,t+1)$-decomposable;
	\item{} if $n = 4t+2$ and $G$ is not isomorphic to $K_{3,3}$, then $G$ is $(t,t+1,t,t+1)$-decomposable;
	\item{} if $n = 4t+2$, then $G$ is $(t-1,t,t+1,t+2)$-decomposable.
	\end{enumerate}
\end{lemma}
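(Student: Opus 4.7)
My plan is to prove the four parts of the lemma simultaneously by induction on $n$. The base cases would be handled by direct inspection of the finitely many connected cubic graphs with $n$ at most some small threshold (say, $n \le 10$), exhibiting explicit spanning subgraphs in every non-exceptional case and verifying by short counting/parity arguments that $K_4$ admits no $(1,1,1,1)$-decomposition and $K_{3,3}$ admits no $(1,2,1,2)$-decomposition, matching the stated exceptions. Observation~\ref{obs:complement} can be used to halve the amount of casework in the base step, since $(t,t,t,t)$ is self-complementary, while $(t-1,t-1,t+1,t+1)$ and $(t-1,t,t+1,t+2)$ pair up with their reverses.

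The inductive step would proceed by locating a reducible local configuration in $G$. My first choice is a $4$-cycle $X = a_1 a_2 a_3 a_4$: letting $a_i'$ denote the neighbor of $a_i$ outside $X$ (with coincidences handled as distinct subcases), I would delete $X$ and add two shortcut edges pairing $a_1', a_2', a_3', a_4'$ so the resulting graph $G'$ is cubic on $n-4$ vertices. Since $n - 4 \equiv n \pmod 4$, the parity class is preserved, so I may invoke the very same part of the lemma inductively, taking a small detour via a direct construction whenever $G'$ happens to be $K_4$ or $K_{3,3}$. The inductive subgraph $H' \subseteq G'$ is then lifted to $H \subseteq G$: each shortcut edge is replaced by a length-three walk through $X$, and the internal edges of $X$ are chosen to supply the four vertices $a_1, \dots, a_4$ with exactly the degrees needed to match the target distribution. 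A bounded case analysis on the $2$-bit vector encoding which shortcut edges lie in $H'$, cross-referenced with the $2^4$ choices of internal edges of $X$, shows that the local degrees can always be realised.

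Some connected cubic graphs — most notoriously the snarks of girth at least $5$, such as the Petersen graph — contain no $4$-cycle, so I would supply a secondary reduction for the high-girth case. Contracting a pair of adjacent edges centred at a vertex (reducing $n$ by $2$) toggles between the $n=4t$ and $n=4t+2$ statements, while contracting a longer induced path (reducing $n$ by $4$) keeps the parity class. A $5$-cycle, guaranteed by the $\{3,4,5\}$-girth dichotomy once the $4$-cycle reduction fails, provides a natural site; the lift is by the same flavour of case analysis, slightly longer because the boundary has more vertices. To pass between the balanced statements (i), (iii) and the skewed statements (ii), (iv), I would either run the induction in parallel for all four, or start from a decomposition of type (i)/(iii) and perform an alternating-path swap along a shortest $H$-alternating $uv$-path whose first and last edges lie in $H$; flipping this path reduces $d_H(u)$ and $d_H(v)$ each by one, and two disjoint such flips produce the $(-1,-1,+1,+1)$ shift demanded by (ii), with the analogous argument yielding (iv).

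The main obstacle, I expect, is the inductive step: keeping the case analysis tractable across the several possible identifications among $\{a_1', a_2', a_3', a_4'\}$, ensuring the reduced graph $G'$ stays connected and non-exceptional, and covering the high-girth cases where the natural $4$-cycle reduction is unavailable. A secondary difficulty is that, in the swap argument deriving (ii) and (iv) from (i) and (iii), one must guarantee the existence of an alternating path of the correct parity; this hinges on the connectedness of $G$ together with the precise degree-distribution of the starting $H$, and may need ad hoc modifications if the natural candidate path collides with itself.
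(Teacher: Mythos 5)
Your route---induction on $n$ via local reductions (delete a $4$-cycle, or an edge-plus-suppression in the high-girth case) combined with alternating-path swaps to pass from the balanced to the skewed distributions---is genuinely different from the paper's argument, which is a single direct construction: all edges are colored $0$ and then greedily recolored $1$, first growing the set of $3$-vertices along a shortest cycle and its neighborhood, then completing the $2$- and $1$-vertices by three explicit rules, with counting/parity estimates (plus one small sporadic case) showing the process never gets stuck. However, as written your plan has genuine gaps, and they sit exactly at the load-bearing points. First, the $4$-cycle reduction need not produce a simple connected cubic graph: coincidences or adjacencies among $a_1',\dots,a_4'$ force parallel edges or loops, and deleting $X$ can disconnect $G'$, in which case the lemma (stated only for connected graphs) cannot be invoked; repairing this would require the machinery for combining decompositions of components---which in the paper is the content of the separate theorem for disconnected graphs, itself derived from this lemma and carrying its own exceptions ($2K_4$, $3K_4$)---so you would have to build that into the induction rather than wave at it. Second, the high-girth reduction as described does not preserve cubicity: contracting two adjacent edges centred at a vertex creates a vertex of degree $5$. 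The standard substitute (delete an edge $uv$ and suppress the two resulting degree-$2$ vertices, which is safe for girth $\ge 5$) reduces $n$ by $2$, but then the lift is genuinely obstructed in some configurations: if neither suppressed edge lies in $H'$, the two reinstated vertices can only acquire degree from the edge between them, so you cannot manufacture the required extra $3$-vertex without first modifying $H'$, and no mechanism for such a modification is provided. Third, the swap deriving (ii) and (iv) from (i) and (iii) needs two disjoint $H$-alternating paths with prescribed endpoint degrees and $H$-edges at both ends; connectivity of $G$ alone does not guarantee these, and you explicitly leave this open.

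In short, the base-case inspection and the overall architecture are reasonable, but the inductive step is currently a programme rather than a proof: connectivity and simplicity of the reduced graph, realizability of the lift in all local cases, and existence of the alternating paths are each unresolved, and each can fail for the naive choices. By contrast, the paper's greedy construction sidesteps reductions entirely (so no reduced graph ever needs to be connected, simple, or non-exceptional) at the cost of the careful counting in its Stages 1--3; if you want to pursue your approach, the place to invest effort is a precise catalogue of the $4$-cycle/edge-deletion lifts together with a lemma guaranteeing the required alternating paths, since those are the claims that currently have no justification.
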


\begin{proof}
	Let $G$ be a connected cubic graph and let $n = |V(G)|$.
	We will show that for each of the statements $(i)$-$(iv)$, we can construct a spanning subgraph $H$ of $G$ 
	with a suitable number of vertices of each degree.
	In particular, we will color the edges of $G$ with colors $0$ and $1$, 
	and the edges of color $1$ will represent the edges of $H$.
	
	We consider the cases with $t=1$ separately.
	Suppose first that $t = 1$ and $n=4t$. Then $G$ is isomorphic to $K_4$ and 
	the graph $H = 2K_1 \cup K_2$ realizes the statement $(ii)$.
	Suppose now that $t = 1$ and $n=4t+2$.
	There are two cubic graphs on $6$ vertices: $K_{3,3}$ and the $3$-prism (i.e., the complement of $C_6$).
	If $G$ is isomorphic to $K_{3,3}$, then the graph $H = 3K_1 \cup P_3$ 
	realizes the statement $(iv)$.
	Otherwise $G$ is isomorphic to the $3$-prism.
	The statement $(iii)$ is realized by the graph $H$ being isomorphic 
	to a triangle with one pendant edge and two isolated vertices,
	and the statement $(iv)$ is realized by the graph $H = 3K_1 \cup P_3$.
	
	So, we may assume that $t \ge 2$ and thus $n \ge 8$.
	We first color all the edges of $G$ with $0$, 
	and in the following steps, we carefully recolor some of them with color $1$.
	We call a vertex with $k$ incident edges of color $1$ a {\em $k$-vertex},
	and we denote the set of all $k$-vertices by $V_k$; note that after recoloring edges 
	with $0$ or $1$, we always update the sets $V_i$ accordingly.	
	Note also that, by the Handshaking Lemma, the sizes of $V_3$ and $V_1$ always have the same parity.
	By $n_i$ we denote the target number of $i$-vertices in $H$.

 Our argument is divided into several natural stages.

	\bigskip\noindent
	{\bf Stage 1: Determining $3$-vertices.} \quad
		We choose the vertices for $V_3$ one by one; 
		namely, we start by finding a shortest cycle $C = v_1v_2\dots v_g$ in $G$,
		where $g$ is the length of $C$.
				
		We first claim that $g \le n/2$.		
  		Suppose to the contrary that $g > n/2$.
		Since $n \ge 8$, then $g \ge 5$.
		By the minimality of $C$, it has no chords and no two vertices of $C$ can have a common neighbor outside $C$. Hence, 
		each of the $g$ vertices on $C$ has its unique neighbor not in $C$,
		and thus $2g \le n$, a contradiction.
		
		We continue by coloring all edges incident to $v_1$ with $1$,
		and then proceed with recoloring the edges incident with the vertices $v_2$, $v_3$, and so on along $C$
		until $|V_3| = n_3$ or $V_3 = V(C)$.
		Note that since $G$ is not isomorphic to $K_4$ and $C$ is a shortest cycle,
		at every step exactly one vertex is introduced to $V_3$.
		
		Now, if $g < n_3$, then we continue recoloring to $1$ all edges incident 
		with any vertex adjacent to a vertex from $V_3$
		which is not adjacent to any vertex from $V_2$, until $|V_3| = n_3$.
		Note that we can always choose such a vertex $v$. 
  		Indeed, this is obvious if $e_0(V_2,V_1\cup V_0)>0$. 
		Otherwise, as 
		$|V_3|+|V_2|<n_3+n_2<n$ and $G$ is connected, there is a neighbor $v\in V_1$ of some vertex in $V_3$; since $e_0(V_2,V_1\cup V_0)=0$, $v$ has no neighbors in $V_2$.
		Therefore, in each step, the size of $V_3$ increases by $1$ 
		and we can achieve the target size of $V_3$.

		The choice of the vertices for $V_3$ guarantees that 
		the graph induced by $V_3$ is connected. 
		This means that $e(V_3) \ge n_3 - 1$, 
		and if $g \le n_3$, then there is a cycle in $V_3$,
		giving $e(V_3) \ge n_3$.

		We now compute the number of edges $\out{V_3}$ with exactly one end vertex in $V_3$.		
		By counting the half-edges incident with the vertices in $V_3$, we infer
		$$
			\out{V_3} = 3n_3 - 2 e(V_3) \le 3n_3 - 2 (n_3-1) = n_3 + 2\,.
		$$	
		Therefore, after completing the set $V_3$, we have that 
		\begin{equation*}
			\label{eq:V1}
			2|V_2| + |V_1| = \out{V_3} \le n_3 + 2\,.
		\end{equation*}		
		It follows that 
		\begin{equation}
			\label{eq:V2}
			|V_2| \le n_2 \qquad {\rm and} \qquad |V_1| \le n_3 + 2 \le n_1 + 2\,.
		\end{equation}
		Since in the cases $(ii)$ and $(iv)$, we have $n_1 = n_3 + 2$, 
		this implies that in cases $(ii)$ and $(iv)$, $|V_1| \le n_1$.
		Moreover, if $g \le n_3 + 1$, then all the edges of $C$ are colored with $1$, 
		and thus $|V_1| \le n_3 \le n_1$.
		
	\bigskip\noindent
	{\bf Stage 2: Determining $2$-vertices.} \quad
		We continue by completing the set $V_2$, which may already contain some vertices from the previous stage.				
		First, we define three rules for choosing $2$-vertices.
		\begin{itemize}
			\item[$(R_1)$] If $|V_2| < n_2 - 1$ and there is a vertex $v \in V_1 \cap N_1(V_2 \cup V_3)$ adjacent to a vertex $u \in V_1$ 
				(hence, $uv$ has color $0$),
				then we color $uv$ by $1$; 
				thus increasing the size of $V_2$ by $2$ and decreasing the size of $V_1$ by $2$.
			\item[$(R_2)$] If $|V_2| < n_2$ and there is a vertex $v \in V_1$ with a neighbor $u \in V_0$,
				then we color $uv$ by $1$; 
				thus increasing the size of $V_2$ by $1$, decreasing the size of $V_0$ by $1$, and retaining the size of $V_1$.
			\item[$(R_3)$] If $|V_2| < n_2$, $|V_1| < n_1$, and there is a vertex $v \in V_0$ with two neighbors $u,w \in V_0$,
				then we color $vu$ and $vw$ with $1$;
				thus increasing the size of $V_2$ by $1$, increasing the size of $V_1$ by $2$, and decreasing the size of $V_0$ by $3$.
		\end{itemize}
		
		Next, until $|V_2| = n_2$, we repeatedly apply the rule $R_1$ if possible, 
		and otherwise the rule $R_2$ if possible.
		Moreover, when $R_2$ is applied, we prefer coloring an edge of $C$ 
		if possible.
		
		We claim that if $|V_2| < n_2$ and we cannot apply rules $R_1$ or $R_2$, 
		then by~\eqref{eq:V2}, we have that $|V_1| \le n_1$.
		Recall that in the cases $(ii)$ and $(iv)$, as well as when $g \le n_3 + 1$,
		this holds even at the beginning of Stage~2,
		while applying $R_1$ or $R_2$ does not increase $|V_1|$.
		In the remaining two cases, $(i)$ and $(iii)$, 
		we have $n_3 + n_2 = n/2 \ge g$.
		Moreover, we also have that $g > n_3 + 1$.
		In this case, we have $|V_2|=0$ at the beginning of Stage~2.
		Suppose that we cannot apply $R_1$ until we apply $R_3$ or $|V_2| = n_2$.
		Then, by the rule, we apply $R_2$ and color the edge in $C$ first.
		This process will be repeated at most $g-2$ times.
		After that, $|V_2| = g - |V_3| - 2 \le n_2 - 2$,
		and there is only one edge in $C$ of color $0$ incident to two vertices in $V_1$.
		Hence, we apply $R_1$ on this edge.
		
		Suppose now that we cannot apply rules $R_1$ or $R_2$, 
		and we still have $|V_2| < n_2$. 
		We will show that we can apply $R_3$, meaning also that we retain $|V_1| \le n_1$ 
		(recall that the size of $V_1$ increases by $2$ at every application of $R_3$).
		
		Since $R_1$ and $R_2$ cannot be applied, 
		we have that there is no edge between $V_1$ and $V_0$ (the rule $R_2$ does not apply),
		and that either $e_0(V_1)=e(V_1) = 0$ or $|V_2| = n_2 - 1$ (the rule $R_1$ does not apply).
		We will show by contradiction that in any case, we can apply the rule $R_3$.
		So, suppose that $R_3$ cannot be applied. We consider two cases.

		\begin{figure}[htp!]
			$$
				\includegraphics{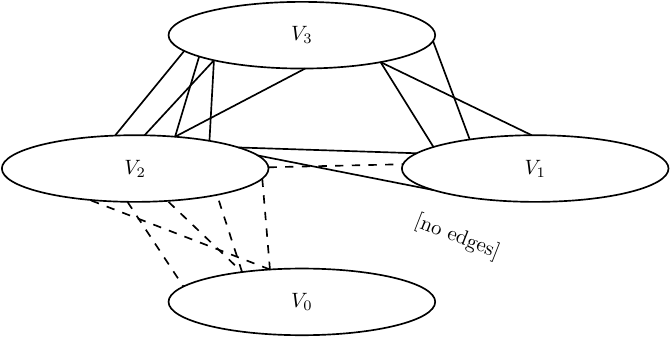}
			$$
			\caption{The graph $G$ in the case when $R_1,R_2$ do not apply, i.e. $e(V_1,V_0) = 0$.
				The edges of color $1$ and $0$ are depicted solid and dashed, respectively.}
			\label{fig:2}
		\end{figure}
  
		Suppose first that $e(V_1,V_0) = 0$ and $e(V_1) = 0$ (see Figure~\ref{fig:2}).
		Then, since $G$ is connected and thus 
		there is at least one edge between $V_2$ and $V_0$,
		and for each vertex $v \in V_1$, 
		we have $N_0(v) \subseteq V_2$,
		it follows that $2|V_1| < |V_2| < n_2$.
		Now, if there is a vertex $v \in V_0$ with two neighbors 
		$u,w \in V_0$, then we must have $|V_1| = n_1$.
		But in this case, $2n_1 < n_2$, which means that $t < 1$, a contradiction.		
		So, every vertex in $V_0$ has at most one neighbor from $V_0$ and therefore, $e(V_0,V_2) \ge 2|V_0|$.
		By the construction, we also have that $|V_0| > n_0$ and $|V_2| - 2|V_1| \ge e(V_0,V_2)$, which altogether gives
		$$
			n_2 > |V_2| \ge 2|V_1| + 2|V_0| \ge 2(n_0 + 1)\,,
		$$
		a contradiction.
				
		Second, suppose that $e(V_1,V_0) = 0$ and $|V_2| = n_2 - 1$ (see Figure~\ref{fig:2}).
		
		We first argue that $e(V_1) \le 2$.
		Indeed, if $e(V_1) \ge 1$ when $|V_2| \le n_2 - 2$, we would apply $R_1$.
		Otherwise, when $|V_2| \le n_2 - 2$, 
		we have that $e(V_1) = 0$ and
		either the rule $R_2$ is applied, meaning that by coloring the edge $vu$, 
		at most $2$ edges between vertices in $V_1$ are introduced 
		(in the case when $u$ had three neighbors from $V_1$ before coloring $uv$),
		or the rule $R_3$ is applied and at most $1$ edge between vertices in $V_1$ 
		is introduced (in the case when $v$ and $w$ are adjacent).
		
		Now, if every vertex in $V_0$ has at most one neighbor from $V_0$,
		then we again have $e(V_0,V_2) \ge 2|V_0| \ge 2(n_0+1)$, and moreover, since $|V_2| \ge e(V_0,V_2)$,
		we infer that 
		$$
			n_2 > |V_2| \ge 2(n_0+1)\,,
		$$
		a contradiction.
		
		Therefore, there is at least one vertex $v \in V_0$ with two neighbors $u,w \in V_0$
		and $|V_1| = n_1$ (recall that by the parity, if $|V_1| < n_1$, then also $|V_1| \le n_1 - 2$).
		By the connectivity of $G$, we have $|V_2| > e_0(V_2,V_1)$,
		and since $e(V_1) \le 2$, we have $e_0(V_2,V_1) \ge 2(n_1 - 2)$.
		Thus,
		$$
			n_2 - 1 = |V_2| > e_0(V_2,V_1) \ge 2(n_1 - 2)\,,
		$$
		and hence
  		\begin{equation}
			\label{eq:n2n1}
			n_2 > 2n_1 - 3\,.
		\end{equation}
		In the cases of small $n$, we do not reach a contradiction, 
		and thus we need to show separately that every cubic graph 
		with the given properties
		admits a suitable decomposition.
		We analyze~(\ref{eq:n2n1}) 
  for each of the four theorem's statements separately (recall that $t \ge 2$):
		\begin{enumerate}[(i)]
			\item{} $t > 2t-3$ \quad $\Rightarrow$ \quad $t < 3$. 
				However, this means that $t=2$, and since $|V_1| \le n_1$ once we potentially need to use R3, 
				there will be no 
    vertex $v \in V_0$ with three neighbors in $V_1$.
				Thus, $e(V_1) \le 1$ and $t > 2t - 1$, implying that $t < 1$.
			\item{} $t - 1 > 2(t+1)-3$ \quad $\Rightarrow$ \quad $t < 0$, 
				hence, this case is irrelevant.
			\item{} $t + 1 > 2t-3$ \quad $\Rightarrow$ \quad $t < 4$.		
				In the case with $t=2$, we argue similarly as in $(i)$, 
				and infer that $t < 2$. 
				So, we only need to consider the case with $t = 3$.
				This means that we need to find a $(3,4,3,4)$-decomposition of $G$.
				
				First, we claim that $e(V_1) = 2$. 
				Indeed, if $e(V_1) \le 1$, 
				then from $n_2 - 1 > 2(n_1-1)$,
				it follows that $t < 2$, a contradiction.
				
				Next, we already have that $|V_3|=|V_2|=|V_1|=3$,
				and so $|V_0| = 5$.
				Moreover, since $e(V_1,V_0)=0$ and $e(V_1) = 2$,
				we have that $e_0(V_2,V_1) = 2$, 
				and consequently $e(V_2,V_0) = 1$.
				Therefore, the five vertices of $V_0$
				must induce a subgraph $H'$ of $G$ isomorphic to $K_4$ 
				with one subdivided edge,
				where the unique vertex of degree $2$, call it $v$,
				is adjacent to a vertex $u$ from $V_2$.
				Let $u_1$ and $u_2$ be the two neighbors of $u$,
				distinct from $v$.
				Since $G$ is cubic with $14$ vertices, there is also a vertex $w$
				not adjacent with any vertex from $V_0$ or 
				with the vertices $u$, $u_1$. 
				Now, we recolor the edges of $G$
				using color $1$ for  
    all the edges of $H'$
				except one edge incident with $v$,				
				the edges $uv$, $uu_1$, 
				and two edges incident with $w$.
				We color the remaining edges with $0$ (see Figure~\ref{fig:special}).
				This gives a spanning subgraph of $G$ 
				with three $3$-vertices, four $2$-vertices,
				three $1$-vertices, and four $0$-vertices as required,
				and thus $G$ has a $(3,4,3,4)$-decomposition.
				\begin{figure}[htp!]
					$$
						\includegraphics{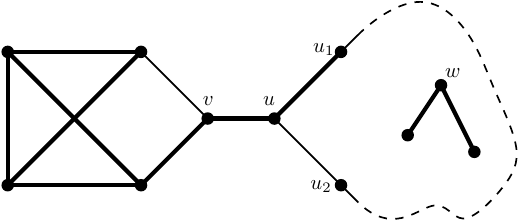}
					$$
					\caption{The graph $G$ in the case $(iii)$ when $t = 3$ (where neighbors of $w$ may coincide with
     $u_2$ or neighbors of $u_1,u_2$ other than $u$). 
						The bold edges depict the edges of color $1$.}
					\label{fig:special}
				\end{figure}
				
			\item{} $t > 2(t+1)-3$ \quad $\Rightarrow$ \quad $t < 1$, 
				hence, this case is irrelevant.
		\end{enumerate}
				
		This means that we can always attain the target number of vertices in $V_2$ 
		by using the three rules and that in all the cases $|V_1| \le n_1$.
	
	\bigskip\noindent
	{\bf Stage 3: Determining $1$-vertices.} \quad		
		This stage is realized 
  only if $|V_1| < n_1$,
		and so, by the parity, $|V_1| \le n_1 - 2$.
		Again, we add $1$-vertices iteratively without changing $3$- and $2$-vertices.
		This means that, in every step, we increase $|V_1|$ by $2$ 
		by coloring an edge between two $0$-vertices with $1$.
		
		Clearly, if there is an edge between two $0$-vertices, then we can increase $|V_1|$.
		Thus, we only need to consider the situation when $e(V_0) = 0$.
		In such a case, since $|V_3| = n_3$, $|V_2| = n_2$, and $|V_1| \le n_1 - 2$,
		we have that $|V_0| \ge n_0 + 2$, and so
		$$
			e(V_0, V_1 \cup V_2) = 3|V_0| \ge 3(n_0 + 2)\,.
		$$
		On the other hand,
		$$
			e(V_0, V_1 \cup V_2) \le n_2 + 2|V_1| \le n_2 + 2(n_1-2)\,.
		$$
  It thus follows that
    	$$
			3n_0 + 6 \le n_2 + 2n_1-4\,,
		$$
  which leads to a contradiction, as in all cases, $n_0\ge n/4$, $n_1,n_2\le n/4+1$.
		
		\bigskip		
		Therefore, we can always construct the sets $V_i$, for $i \in \set{0,1,2,3}$ 
		with the given sizes; this completes the proof.
\end{proof}

\subsection{Non-connected graphs}

Next, we prove that the decomposition properties guaranteed by Lemma~\ref{thm:connected}
hold for all cubic graphs, also those with more components,
with two additional exceptions.

\begin{theorem}
	\label{thm:general}
	Let $G$ be a cubic graph on $n$ vertices.
	The following statements hold:
	\begin{enumerate}[(i)]
	\item{} if $n = 4t$ and $G$ is not isomorphic to $K_4$ and $3K_4$, then $G$ is $(t,t,t,t)$-decomposable;
	\item{} if $n = 4t$ and $G$ is not isomorphic to $2K_4$, then $G$ is $(t-1,t-1,t+1,t+1)$-decomposable;
	\item{} if $n = 4t+2$ and $G$ is not isomorphic to $K_{3,3}$, then $G$ is $(t,t+1,t,t+1)$-decomposable;
	\item{} if $n = 4t+2$, then $G$ is $(t-1,t,t+1,t+2)$-decomposable.
	\end{enumerate}
\end{theorem}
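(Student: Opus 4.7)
My plan is to reduce the problem to the connected components of $G$. Writing $G = C_1\cup\cdots\cup C_k$ with $|C_i|=n_i$, Lemma 2.1 together with Observation 2.3 provides each component with a short menu of candidate signatures $(m(H_i,3),m(H_i,2),m(H_i,1),m(H_i,0))$; the task is to select one candidate per component so that their componentwise sum equals the target signature of the appropriate clause, and then take $H=\bigcup H_i$. By the parity of $n$, the number of components of order $\equiv 2\pmod 4$ is even in cases (i), (ii) and odd in cases (iii), (iv), which dictates the pairing strategy.

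\medskip

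The central observation is that for any two components of orders $4s+2$ and $4r+2$ (even if one or both is $K_{3,3}$), applying Lemma 2.1(iv) to one and its complement (iv)$'$ to the other yields a perfectly balanced contribution $(s+r+1,s+r+1,s+r+1,s+r+1)$; if neither is $K_{3,3}$ one may instead pair (iii) with (iii)$'$. Pairs of $K_4$-components contribute $(2,2,2,2)$ via (ii)+(ii)$'$, and each non-$K_4$ $(4t_i)$-component contributes $(t_i,t_i,t_i,t_i)$ via (i). In case (i), summing these balanced blocks directly yields the target $(t,t,t,t)$. For cases (ii)--(iv) one designates a single component to carry the shifted decomposition --- (ii), (iii), or (iv) respectively --- so that its excess over the balanced signature exactly matches the target's deviation from balanced.

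\medskip

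The real difficulty lies in coping with (a) an odd number of $K_4$-components, since $K_4$ admits no $(1,1,1,1)$-decomposition, and (b) components equal to $K_{3,3}$, which are restricted to types (iv)/(iv)$'$. A lone $K_4$ can usually be absorbed: by pairing with a non-$K_4$ $(4t_H)$-component via (ii)+(ii)$'$, or failing that, with a non-$K_{3,3}$ $(4s+2)$-component in (iii) and a further $(4s+2)$-component in (iv)$'$ to produce the combined $(s+r+2,s+r+2,s+r+2,s+r+2)$. I expect the hardest casework to be the residual situation $G = \alpha K_4 \cup \gamma K_{3,3}$, for which I would supply explicit ad-hoc decompositions. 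For instance, $K_4\cup 2K_{3,3}$ admits the target $(4,4,4,4)$ by taking the empty spanning subgraph of $K_4$ (signature $(0,0,0,4)$) and, on each $K_{3,3}$ with bipartition $\{a_1,a_2,a_3\},\{b_1,b_2,b_3\}$, the subgraph with edge set $\{a_1b_1,a_1b_2,a_1b_3,a_2b_1,a_3b_1,a_2b_2\}$, which realizes the signature $(2,2,2,0)$. This ``$K_4\cup 2K_{3,3}$ block'' can be substituted for a standard pairing inside any larger $\alpha K_4 \cup \gamma K_{3,3}$ graph, and a further ``$5K_4\mapsto(5,5,5,5)$ block'' handles the purely-$K_4$ cases with $\alpha\ge 5$ odd. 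Finally, the exclusions $K_4$, $3K_4$, $2K_4$ (and, for (iii), $K_{3,3}$) are verified to admit no decomposition of the prescribed form by short enumerations over the eleven valid $K_4$ signatures and the parity constraint $m_1+m_3\equiv 0\pmod 2$ that each component must satisfy.
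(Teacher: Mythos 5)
Your proposal follows essentially the same route as the paper: reduce to Lemma~\ref{thm:connected} plus the complement observation, combine components by pairing them into perfectly balanced blocks with one designated component carrying the shift, treat an odd number of $K_4$-components and the all-exceptional graphs $\alpha K_4\cup\gamma K_{3,3}$ by explicit small decompositions, and verify the excluded graphs by enumeration. The only structural difference is cosmetic: the paper peels off one non-exceptional component via a minimal-counterexample induction, while you assign signatures to all components globally; both rest on the same menu of signatures. Your two explicit constructions check out (the edge set you give on $K_{3,3}$ does realize $(2,2,2,0)$, and $5K_4$ is indeed $(5,5,5,5)$-decomposable, e.g.\ via $(0,0,0,4),(0,1,2,1),(1,0,3,0),(2,2,0,0),(2,2,0,0)$), and your absorption of a lone $K_4$ via (ii)+(ii)$'$ or via $(0,0,2,2)$ plus (iii)+(iv)$'$ is correct.

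The one place where your sketch understates the work is the all-exceptional case for the \emph{shifted} clauses (ii)--(iv). There the ``designate one component to carry the shift'' device breaks down: (iii) is unavailable for $K_{3,3}$, and the Lemma's menu (only $(0,0,2,2)/(2,2,0,0)$ for $K_4$ and $(0,1,2,3)/(3,2,1,0)$ for $K_{3,3}$) cannot reach, for example, $(2,2,4,4)$ for $2K_{3,3}$ or $(3,3,5,5)$ for $4K_4$ in clause (ii), $(4,5,4,5)$ for $3K_{3,3}$ or $(3,4,3,4)$ for $2K_4\cup K_{3,3}$ in clause (iii), or $(1,2,3,4)$ for $K_4\cup K_{3,3}$ in clause (iv); your two named blocks (both aimed at balanced targets) do not resolve these. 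So several further ad-hoc signatures are genuinely needed --- e.g.\ $(0,0,2,4)$, $(2,3,0,1)$, $(1,2,3,0)$ of $K_{3,3}$ and $(0,1,2,1)$, $(1,0,3,0)$, $(1,2,1,0)$ of $K_4$, exactly the ones the paper lists in its Case~2 analysis by the parities of $k$ and $\ell$. All of them exist, so your plan does complete, but a full write-up must include this additional block list rather than only the $K_4\cup 2K_{3,3}$ and $5K_4$ blocks.
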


\begin{proof}
	By Lemma~\ref{thm:connected}, we may assume that $G$ has at least two connected components.
	
	Let $G$ be the minimal counterexample to the theorem with respect to the number of connected components.
	We consider two cases.
	
	\bigskip\noindent
	{\bf Case 1.} At least one component of $G$, we call it $H$, 
		is not isomorphic to $K_4$ nor $K_{3,3}$.\quad			
		Let $n_1 = |V(G-H)|$ and $n_2 = |V(H)|$.
		We consider two subcases regarding the number of vertices of $G$.
		\begin{itemize}
			\item[$(a)$] Suppose that $n = 4t$. \quad
				
				If $n_1 = 4t_1$, for some integer $t_1$, 
				then $n_2 = 4t_2$, for some integer $t_2$, and so $t=t_1 + t_2$.				
				Suppose first that $G-H$ is not isomorphic to $2K_4$.
				Then, by the minimality of $G$,
				$G-H$ is $(t_1-1,t_1-1,t_1+1,t_1+1)$-decomposable
				and 
				$H$ is (using also Observation~\ref{obs:complement}) $(t_2+1,t_2+1,t_2-1,t_2-1)$-decomposable,
				which implies that 
				$G$ is $(t_1+t_2,t_1+t_2,t_1+t_2,t_1+t_2)$-decomposable, and hence $(t,t,t,t)$-decomposable;
				this realizes statement $(i)$.
				The statement $(ii)$ is realized, since $H$ is also $(t_2,t_2,t_2,t_2)$-decomposable,
				implying that $G$ is $(t_1-1+t_2,t_1-1+t_2,t_1+1+t_2,t_1+1+t_2)$-decomposable,
				hence $(t-1,t-1,t+1,t+1)$-decomposable.
				If $G-H$ is isomorphic to $2K_4$,
				then $t_1 = 2$ and we use the fact that $2K_4$ is $(2,2,2,2)$-decomposable,
				which settles both the case $(i)$ and the case $(ii)$,
				since $H$ is $(t_2,t_2,t_2,t_2)$-decomposable 
				and $(t_2-1,t_2-1,t_2+1,t_2+1)$-decomposable, respectively.
				
				Otherwise, $n_1 = 4t_1 + 2$, for some integer $t_1$, 
				and $n_2 = 4t_2 + 2$, for some integer $t_2$, and so $t=t_1 + t_2 + 1$.
				Now, we use that $G-H$ is $(t_1-1, t_1, t_1+1, t_1+2)$-decomposable.
				We realize the statement $(i)$ by using that
				$H$ is $(t_2+2,t_2+1,t_2,t_2-1)$-decomposable,
				which gives that $G$ is $(t_1 + t_2 + 1, t_1 + t_2+1, t_1+t_2+1,t_1+t_2+1)$-decomposable, 
				and hence $(t,t,t,t)$-decomposable.
				To realize the statement $(ii)$, 
				we make use of the fact that $H$ is $(t_2+1,t_2,t_2+1,t_2)$-decomposable,
				which gives that $G$ is $(t_1 + t_2, t_1 + t_2, t_1+t_2+2,t_1+t_2+2)$-decomposable, 
				and hence $(t-1,t-1,t+1,t+1)$-decomposable.
				
			\item[$(b)$] Suppose that $n = 4t+2$. \quad
				
				If $n_1 = 4t_1$, for some integer $t_1$, 
				then $n_2 = 4t_2 + 2$, for some integer $t_2$, and so $t=t_1 + t_2$.
				Again, suppose first that $G-H$ is not isomorphic to $2K_4$.
				Then, we exploit the fact that $G-H$ is $(t_1-1,t_1-1,t_1+1,t_1+1)$-decomposable.
				Since $H$ is $(t_2+2,t_2+1,t_2,t_2-1)$-decomposable,
				it follows that $G$ is $(t_1+t_2+1, t_1+t_2, t_1+t_2+1, t_1+t_2)$-decomposable,
				and thus $(t+1,t,t+1,t)$-decomposable, which, using Observation~\ref{obs:complement}, realizes the statement $(iii)$.
				The statement $(iv)$ is realized by the fact 
				that $H$ is $(t_2,t_2+1,t_2,t_2+1)$-decomposable,
				implying that $G$ is $(t_1+t_2-1, t_1+t_2, t_1+t_2+1, t_1+t_2+2)$-decomposable,
				and hence $(t-1,t,t+1,t+2)$-decomposable.
				If $G-H$ is isomorphic to $2K_4$, then $t_1 = 2$ and
				we again use the fact that $2K_4$ is $(2,2,2,2)$-decomposable,
				settling the cases $(iii)$ and $(iv)$
				due to $(t_2,t_2+1,t_2,t_2+1)$-decomposability
				and $(t_2-1,t_2,t_2+1,t_2+2)$-decomposability of $H$, respectively.
				
				If $n_1 = 4t_1 + 2$, for some integer $t_1$, 
				then $n_2 = 4t_2$, for some integer $t_2$, and again $t=t_1 + t_2$.
				We use the fact that $G-H$ is $(t_1-1,t_1,t_1+1,t_1+2)$-decomposable.
				Since $H$ is $(t_2+1,t_2+1,t_2-1,t_2-1)$-decomposable,
				we have that $G$ is $(t_1+t_2, t_1+t_2+1, t_1+t_2, t_1+t_2+1)$-decomposable,
				and hence $(t,t+1,t,t+1)$-decomposable, realizing the statement $(iii)$.
				Since $H$ is $(t_2,t_2,t_2,t_2)$-decomposable,
				we have that $G$ is $(t_1+t_2-1, t_1+t_2, t_1+t_2+1, t_1+t_2+2)$-decomposable,
				and hence $(t-1,t,t+1,t+2)$-decomposable, realizing the statement $(iv)$.
		\end{itemize}		
		
	\bigskip\noindent
	{\bf Case 2.} Every component of $G$ is isomorphic to $K_4$ or $K_{3,3}$.\quad
		Note first that by Observation~\ref{obs:complement}, 
		we have that any graph $H$ comprised of two isomorphic connected components $H'$,
		where $H'$ admits an $(a,b,c,d)$-decomposition such that $a + d = b + c$,
		admits a $(t',t',t',t')$-decomposition, where $t' = \frac{|V(H)|}{4}$; 
		we call such a decomposition {\em perfectly balanced}.
		As 
  certificate lists of admissible decompositions below,
  pairs of $K_4$ and of $K_{3,3}$ thus admit perfectly balanced decompositions.
		
		Next, we list possible decompositions of $K_4$ and $K_{3,3}$,
		which will be used in combinations for realizations of target decompositions of $G$.		
		It is easy to verify that $K_4$ is		
		$(a,b,c,d)$-decomposable,
		for every 
		\begin{align*}
			(a,b,c,d) \in &\set{(0,0,0,4), (0,0,2,2), (0,0,4,0), (0,1,2,1), (0,2,2,0), (0,3,0,1)}\,.
		\end{align*}

		For $K_{3,3}$, we have that it is $(a,b,c,d)$-decomposable,
		for every
		\begin{align*}
			(a,b,c,d) \in & \{
				(0,0,0,6),
				(0,0,2,4),
				(0,0,4,2),
				(0,0,6,0),
				(0,1,2,3),
				(0,1,4,1), \\
				&(0,2,2,2),
				(0,3,2,1),
				(0,4,0,2),
				(0,4,2,0),
				(1,0,3,2),	
				(1,1,3,1)\}\,.
		\end{align*}

		Now, let $k$ and $\ell$ be the number of connected components of $G$
		isomorphic to $K_4$ and $K_{3,3}$, respectively.
		We consider the cases regarding the parity of $k$ and $\ell$.
		\begin{itemize}
			\item[$(a)$] Suppose that $k$ and $\ell$ are both even. \quad		
				Then the statement $(i)$ is realized by the remark above 
				that every pair of isomorphic components of $G$ admits a perfectly 
				balanced decomposition.
				
				To show that the statement $(ii)$ can be realized,
				we consider two cases regarding $\ell$.
				Suppose first that $\ell > 0$.
				Then, we split $G$ in two parts, 
				the first part being $H_1 = kK_4 \cup (\ell-2)K_{3,3}$
				and hence having a perfectly balanced decomposition,
				and the second part being $H_2 = 2K_{3,3}$.
				For the two copies of $K_{3,3}$ we use
				a $(2,2,2,0)$-decomposition and a $(0,0,2,4)$-decomposition.
				This altogether gives a $(t-1,t-1,t+1,t+1)$-decomposition of $G$.
				
				Suppose now that $\ell = 0$ and thus $k > 2$.
				We again split $G$ in two parts, 
				the first part being $H_1 = (k-4)K_4$,
				which has a perfectly balanced decomposition,
				and the second part being $H_2 = 4K_{4}$,
				which has a $(3,3,5,5)$-decomposition
				(exploiting a $(2,2,0,0)$-decomposition,
				a $(1,0,3,0)$-decomposition, a $(0,1,2,1)$-decomposition,
				and a $(0,0,0,4)$-decomposition of $K_4$).
				
			\item[$(b)$] Suppose that $k$ and $\ell$ are both odd. \quad
				Again, we split $G$; namely, 
				into the graph $H_1 = (k-1)K_4 \cup (\ell -1)K_{3,3}$ and the graph $H_2 = K_4 \cup K_{3,3}$.
				The graph $H_1$ admits a perfectly balanced decomposition, 
				and for the graph $H_2$, 
				in order to realize the statement $(iii)$,
				we use a $(2,2,0,0)$-decomposition of $K_4$
				and a $(0,1,2,3)$-decomposition of $K_{3,3}$,
				obtaining a $(t,t+1,t,t+1)$-decomposition of $G$.
				To realize the statement $(iv)$, for $H_2$, we use
				a $(0,0,0,4)$-decomposition of $K_4$ and
				a $(1,2,3,0)$-decomposition of $K_{3,3}$,
				obtaining a $(t-1,t,t+1,t+2)$-decomposition of $G$.
				
			\item[$(c)$] Suppose that $k$ is even and $\ell$ is odd. \quad
				Note first that the statement $(iv)$ is trivially realized, 
				since $K_{3,3}$ admits a $(0,1,2,3)$-decomposition, 
				and we obtain a desired decomposition of $G$ by means of
				a perfectly balanced decomposition of $kK_4 \cup (\ell-1)K_{3,3}$
				and a $(0,1,2,3)$-decomposition of one $K_{3,3}$.
				
				Hence, we only need to realize the statement $(iii)$.
				We consider two subcases regarding $\ell$.
				
				Suppose first that $\ell > 1$.
				Then, $\ell \ge 3$ and we split $G$ into $H_1 = kK_4 \cup (\ell - 3)K_{3,3}$
				and $H_2 = 3K_{3,3}$.
				For $H_1$, we exploit a perfectly balanced decomposition,
				and for the three components of $H_2$, 
				we make use of the fact that $K_{3,3}$ is
				$(0,0,2,4)$-decomposable,
				$(2,3,0,1)$-decomposable, and
				$(2,2,2,0)$-decomposable,
				yielding a $(4,5,4,5)$-decomposition for $H_2$.
				
				Suppose now that $\ell = 1$.
				Then, $k \ge 2$.
				We split $G$ in $H_1 = (k-2)K_4$ and $H_2 = 2K_4 \cup K_{3,3}$.
				Again, for $H_1$, we use a perfectly balanced decomposition,
				and for $H_2$, 
				a $(0,0,0,4)$-decomposition and a $(1,2,1,0)$-decomposition of $K_4$, 
				and a $(2,2,2,0)$-decomposition of $K_{3,3}$,
				giving a $(3,4,3,4)$-decomposition for $H_2$.
			
			\item[$(d)$] Suppose that $k$ is odd and $\ell$ is even. \quad
				In this case, by the perfectly balanced decomposition of pairs of isomorphic components of $G$
				and the fact that $K_4$ admits a $(0,0,2,2)$-decomposition, the statement $(ii)$ is realized.
				
				We will show that except in the case with $k=3$ and $\ell = 0$, 
				we can always realize also the statement $(i)$.
				
				Suppose first that $\ell = 0$.
				Then $k \ge 5$. 
				We split $G$ into $H_1 = (k - 5)K_{4}$ and $H_2 = 5K_{4}$.
				For $H_1$, we use a perfectly balanced decomposition, and
				for the five components of $H_2$, we use 
				a $(0,0,0,4)$-decomposition,
				a $(0,1,2,1)$-decomposition,
				a $(1,0,3,0)$-decomposition, 
				and twice a $(2,2,0,0)$-decomposition, 
				yielding a $(5,5,5,5)$-de\-com\-po\-si\-tion of $H_2$.
				
				Suppose now that $\ell \ge 2$.
				We split $G$ in $H_1 = (k - 1)K_{4} \cup (\ell-2)K_{3,3}$ and $H_2 = K_{4} \cup 2K_{3,3}$.
				Again, for $H_1$, we use a perfectly balanced decomposition.
				For the unique $K_4$ component of $H_2$, we use		
				a $(0,0,0,4)$-decomposition, and for the two $K_{3,3}$ components,
				we use twice a $(2,2,2,0)$-decomposition, 
				hence obtaining a $(4,4,4,4)$-decomposition of $H_2$.
		\end{itemize}
		
		This completes the proof.
\end{proof}
It is easy to see that the four listed exceptions indeed 
do not admit the required decompositions.

\section{Concluding remarks}

As $2K_4$ admits a perfectly balanced decomposition, 
Theorem~\ref{thm:general} immediately implies Theorem~\ref{thm:main}.
It is straightforward to verify that out of the remaining exceptional cubic graphs: $K_4$, $3K_4$, $K_{3,3}$, the last one imposes the largest maximum deviation of $m(H,k)$ from $n/4$, i.e. $3/2$ (and $1$ for the remaining two graphs). Thus, Theorem~\ref{thm:general} implies also the following corollary confirming
Conjecture~\ref{conj:regular} for cubic graphs.
\begin{corollary}
	\label{cor:general}
	Every cubic graph $G$ on $n$ vertices contains a spanning subgraph $H$ 
	such that for every $k$, $0 \le k \le 3$, 
	$$
		\left | m(H,k) - \frac{n}{4} \right | \le \frac{1}{2}\,,
	$$
 unless $G$ is isomorphic to $K_4$, $3K_4$ or $K_{3,3}$, for which
 $\max_{0\le k \le 3} |m(H,k) - \frac{n}{4}| \le \frac{3}{2}$.
\end{corollary}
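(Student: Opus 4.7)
The plan is to derive the corollary directly from Theorem~\ref{thm:general}, by reinterpreting its four decomposition types as bounds on $|m(H,k) - n/4|$ and then performing a short case check for the three exceptional graphs.

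For non-exceptional $G$, I would split on $n \bmod 4$. If $n=4t$ and $G \notin \{K_4, 3K_4\}$, then Theorem~\ref{thm:general}$(i)$ provides a $(t,t,t,t)$-decomposition, so every $m(H,k)$ equals $t = n/4$ and the deviation is $0$. If $n=4t+2$ and $G \not\cong K_{3,3}$, then Theorem~\ref{thm:general}$(iii)$ provides a $(t,t+1,t,t+1)$-decomposition; since $n/4 = t + \frac12$, each $m(H,k) \in \{t, t+1\}$ and the deviation is exactly $\frac12$. Either way the bound $|m(H,k) - n/4| \le \frac12$ holds.

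For the three exceptional graphs, I would exhibit explicit decompositions achieving deviation at most $\frac32$. For $K_4$ with $n/4 = 1$, the $(0,1,2,1)$-decomposition of $K_4$ recorded in the proof of Theorem~\ref{thm:general} yields max deviation $1$. For $3K_4$ with $n/4 = 3$, since $3K_4 \not\cong 2K_4$, Theorem~\ref{thm:general}$(ii)$ applies and supplies a $(2,2,4,4)$-decomposition with max deviation $1$. For $K_{3,3}$ with $n/4 = \frac32$, the $(0,2,2,2)$-decomposition recorded in the proof of Theorem~\ref{thm:general} gives $m(H,k) \in \{0, 2\}$ and max deviation exactly $\frac32$.

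There is no real obstacle in this argument: all the combinatorial substance lives in Theorem~\ref{thm:general}, and the corollary is essentially a reformulation together with a finite check of three small graphs. If one additionally wished to argue that the exceptional bound $\frac32$ is tight, i.e.\ that these three graphs genuinely admit no better decomposition, that would follow from a short degree-sum parity argument (for instance, on $K_{3,3}$ every candidate tuple $(a,b,c,d)$ with entries in $\{1,2\}$ has odd degree sum), but this optimality is not required for the statement of the corollary.
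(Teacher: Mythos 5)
Your proposal is correct and follows essentially the paper's own route: Theorem~\ref{thm:general} disposes of all non-exceptional graphs (deviation $0$ when $n=4t$, exactly $\tfrac12$ when $n=4t+2$), and the three exceptions are handled by explicit decompositions drawn from the lists of $K_4$- and $K_{3,3}$-decompositions in that theorem's proof, exactly as the paper's brief verification intends. Only your optional tightness aside is slightly off --- parity alone does not exclude, e.g., a $(1,2,1,2)$-decomposition of $K_{3,3}$, whose degree sum is even, so ruling it out needs a short direct check --- but, as you note, tightness is not required for the corollary.
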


Alon and Wei~\cite{AloWei23} observed that the Handsaking Lemma immediately implies that $\max_{0\le k \le d} |m(H,k) - \frac{n}{d+1}|$ must be at least $1$ 
for any spanning subgraph $H$ of a $d$-regular graph $G$ of order $n$ with $\frac{n}{d+1}$ and $\big \lceil\frac{d}{2}\big\rceil$ being both odd integers. 
They however admitted that possibly the upper bound of $1$ can be provided for all 
$d$-regular graphs of sufficiently large order.
We specify this suspicion within the following bold conjecture, which additionally asserts that the condition above, 
mentioned by Alon and Wei, is the only reason for achieving the bound $1$ for large enough graphs.
\begin{conjecture}\label{conjecture:bold}
	For every $d$ there is a finite family $\mathcal{D}_d$ of exceptional graphs such that 
	every $d$-regular graph $G$ of order $n$ which does not belong to $\mathcal{D}_d$ 
	contains a spanning subgraph $H$ such that for every $k$, $0 \le k \le d$, 
 \begin{itemize}
\item if $\frac{n}{d+1}$, $\big\lceil\frac{d}{2}\big\rceil$ are odd integers, then $\left | m(H,k) - \frac{n}{d+1} \right | \le 1$;
\item otherwise, $\left | m(H,k) - \frac{n}{d+1} \right | \le \frac{d}{d+1}$ for even $d$, 
and $\left | m(H,k) - \frac{n}{d+1} \right | \le \frac{d-1}{d+1}$ for odd $d$.
 \end{itemize}
\end{conjecture}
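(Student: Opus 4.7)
The plan is to attack Conjecture~\ref{conjecture:bold} by combining, for each fixed $d$, a probabilistic \emph{rough} decomposition on large graphs with structural swap-adjustments that correct it to the exact target, relegating all unresolved small graphs into the exceptional family $\D_d$. Pick a threshold $N_0=N_0(d)$; every $d$-regular graph of order below $N_0$ that fails the conclusion is simply added to $\D_d$, which remains finite since there are only finitely many $d$-regular graphs of any given order. For $n \ge N_0$, start from a spanning subgraph $H_0$ with $m(H_0,k)=\frac{n}{d+1}+o(n)$ for every $k$, whose existence is guaranteed by Fox, Luo and Pham~\cite{FoxLuoPha24} in the range $d=o(n/(\log n)^{12})$, so only pairs $(d,n)$ with $d$ comparable to $n$ remain, and those too can be swept into $\D_d$ at the cost of tweaking $N_0(d)$.

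Next, I would define a catalogue of \emph{corrective swaps}: short alternating walks on the edges of $G$ whose ``in-$H$/out-of-$H$'' status is toggled, each changing at most two values $m(H,k)$ by $\pm 1$. The key step is to show that, starting from $H_0$ and iterating corrective swaps, one can move surplus from every overcrowded class into some undercrowded class until the only residual excesses are forced by parity. I would model this via an auxiliary bipartite \emph{transfer graph} whose left side lists surplus degree classes, right side lists deficit classes, with edges corresponding to applicable swaps, and invoke Hall's theorem (or an expander-mixing estimate on the line graph of $G$) to match surpluses to deficits. The Handshaking Lemma constraint $\sum_k k\cdot m(H,k) \equiv 0 \pmod 2$ is preserved by every swap, and its restriction to integer target vectors explains exactly the two cases of the conjecture: when both $\frac{n}{d+1}$ and $\lceil d/2\rceil$ are odd, the constant target vector is parity-forbidden and one entry must slip by $1$; otherwise the nearest-integer target is parity-admissible and the bounds $\frac{d}{d+1}$ or $\frac{d-1}{d+1}$ simply translate to $m(H,k)\in\{\lfloor \frac{n}{d+1}\rfloor,\lceil \frac{n}{d+1}\rceil\}$.

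For the intermediate range and for connected $G$, I would emulate Lemma~\ref{thm:connected} by cascading through stages indexed by degrees $d,d-1,\dots,0$, where Stage $i$ employs analogues of rules $R_1,R_2,R_3$ that shift vertices between $V_i$ and $V_{i-1}$ (or $V_{i+1}$) without disturbing $|V_j|$ for $j>i$. A component-merging lemma modeled on Theorem~\ref{thm:general} then promotes single-component decompositions to the multi-component case, with additional exceptional graphs arising only when every component already lies in $\D_d$; crucially, Observation~\ref{obs:complement} and ``perfectly balanced'' pair-decompositions of twin components will let many residual offsets cancel out across components.

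The main obstacle is the combinatorial explosion: the number of stuck local configurations, each a potential new element of $\D_d$, grows rapidly with $d$, and the explicit rule-based analysis that works for $d=3$ does not evidently scale. I expect the decisive step is to replace the case work with a single global argument, for instance by proving that an appropriately defined \emph{swap graph} on candidate spanning subgraphs of $G$ is connected whenever $G \notin \D_d$, and then bounding the size of $\D_d$ by an entropy-compression or discharging argument. Even granting such connectivity, controlling $\D_d$ uniformly and verifying that the parity dichotomy is the only genuine obstruction will require a conceptually new input beyond the structural analysis of Theorem~\ref{thm:general}.
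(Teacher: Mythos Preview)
This statement is a \emph{conjecture} in the paper, not a theorem: the authors explicitly write that they ``have no idea how to adapt our approach in the case of $d \ge 4$'' and leave Conjecture~\ref{conjecture:bold} open. The paper proves it only for $d\le 3$ (trivially for $d=1$, by the final Observation for $d=2$, and by Corollary~\ref{cor:general} for $d=3$). There is therefore no ``paper's own proof'' to compare your proposal against for general $d$.

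Your proposal is a research programme, not a proof, and you are candid about this in your final paragraph. The central gap is the corrective-swap step. The Fox--Luo--Pham result yields $m(H_0,k)=\frac{n}{d+1}+o(n)$, so the total surplus to be redistributed may grow unboundedly with $n$; you would need to execute an unbounded number of swaps, and each swap that moves mass from class $k$ to class $k'$ may perturb other classes along the way. Asserting that a bipartite transfer graph satisfies Hall's condition, or that a swap graph on spanning subgraphs is connected, is exactly the hard part and is not argued. Likewise, the cascade of stages ``emulating Lemma~\ref{thm:connected}'' is only a name: the paper's rules $R_1,R_2,R_3$ work because cubic graphs have so few local configurations that contradictions can be forced by hand, and you correctly identify that this case analysis does not scale. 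In short, nothing here goes beyond what the paper already acknowledges as the obstacle, and the proposal should be read as a plausible outline of where a future proof might come from rather than as a proof attempt to be evaluated.
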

The asserted stronger bound for odd $d$ again refers to the Handshaking Lemma. 
Note that our Corollary~\ref{cor:general} proves Conjecture~\ref{conjecture:bold} for cubic graphs. 
We however have no idea how to adapt our approach in the case of $d \ge 4$ in order to prove Conjecture~\ref{conjecture:bold} 
or the original Conjecture~\ref{conj:regular}. 
We reckon even the case of $d=4$ constitutes an interesting open problem.

It is trivial to observe that Conjecture~\ref{conjecture:bold} 
holds for $d=1$. 
For consistency, we thus conclude this paper by sketching an easy argument 
that it also holds in the case of $2$-regular graphs, with $\mathcal{D}_2=\{2C_3,2C_4\}$.
\begin{observation}
Every $2$-regular graph $G$ of order $n$ not isomorphic to $2C_3$ or $2C_4$ 
contains a spanning subgraph $H$ such that for every $k\in\{0,1,2\}$,
 \begin{itemize}
	\item if $\frac{n}{3}$ is an odd integer, then $\left | m(H,k) - \frac{n}{3} \right | \le 1$;
	\item otherwise, $\left | m(H,k) - \frac{n}{3} \right | \le \frac{2}{3}$.
 \end{itemize}
\end{observation}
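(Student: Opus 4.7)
The plan is to view $G$ as a disjoint union of cycles $G=C_{m_1}\cup\cdots\cup C_{m_s}$ with every $m_i\ge 3$, characterise which degree distributions are realisable on a single cycle, and then assemble a global distribution close to $(n/3,n/3,n/3)$. Every spanning subgraph of $C_m$ is encoded by a $\{0,1\}$-colouring of its edges; if the $1$-edges form $k\ge 1$ maximal arcs of total length $b$, then the subgraph has exactly $2k$ vertices of degree $1$, $b-k$ of degree $2$, and $m-b-k$ of degree $0$. By varying the positions and lengths of these arcs one checks that every triple $(a_0,a_1,a_2)$ of nonnegative integers with $a_0+a_1+a_2=m$ and $a_1$ a positive even number is realisable on $C_m$, and that for $a_1=0$ only $(m,0,0)$ and $(0,0,m)$ are realisable.

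For the connected case $s=1$, $G=C_n$, I would pick the target triple $(a_0,a_1,a_2)$ summing to $n$ according to the residue of $n$ modulo $6$, always placing an even value in the middle slot and keeping every entry within $2/3$ of $n/3$ (respectively within $1$ when $n\equiv 3\pmod 6$, which is exactly the case of $n/3$ being an odd integer). Explicit choices: $(n/3,n/3,n/3)$ for $n\equiv 0\pmod 6$; $((n-3)/3,(n+3)/3,n/3)$ for $n\equiv 3\pmod 6$; and analogous near-balanced triples for $n\equiv 1,2,4,5\pmod 6$. Each is realised by the cycle construction above.

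For the disconnected case $s\ge 2$ with $G\notin\{2C_3,2C_4\}$, I would split further. If some cycle has length $m_i\ge 5$, fix near-balanced triples on every other cycle and use the cycle flexibility above on $C_{m_i}$ to absorb the residue, checking that the remaining triple on $C_{m_i}$ satisfies nonnegativity, has even middle coordinate, and avoids the degenerate $a_1=0$ case. Otherwise every cycle has length in $\{3,4\}$, so $G=kC_3\cup\ell C_4$ with $k+\ell\ge 2$ and $(k,\ell)\notin\{(2,0),(0,2)\}$; I would then perform a short case analysis combining the available $C_3$-triples $(3,0,0),(0,0,3),(1,2,0),(0,2,1)$ and $C_4$-triples $(4,0,0),(0,4,0),(0,0,4),(2,2,0),(1,2,1),(0,2,2)$ to reach a global triple within the desired tolerance, much in the spirit of the case analysis in Theorem~\ref{thm:general}.

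The main obstacle is the all-short-cycle subcase, because $C_3$ never realises $(1,1,1)$ and $C_4$ at $a_1=0$ is restricted to its two extreme triples. These limitations pinpoint the two exceptions: for $2C_3$ the only target within tolerance is $(2,2,2)$, which would force the forbidden $(1,1,1)$ on each $C_3$; for $2C_4$ the only even-$a_1$ target within $2/3$ of $8/3$ is $(3,2,3)$, and one checks directly that this triple cannot be split across two copies of $C_4$.
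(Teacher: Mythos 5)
Your per-cycle characterization of realisable degree triples on $C_m$ is correct, and your connected case goes through, but the assembly step for disconnected graphs has a genuine gap. In the subcase where some component $C_{m_i}$ has $m_i\ge 5$, you propose to place near-balanced triples on all other components and let $C_{m_i}$ absorb the residue. This fails as soon as there are many short components: on $C_3$ the only realisable triples that are anywhere near balanced are $(1,2,0)$ and $(0,2,1)$, each contributing $2$ to the middle coordinate, so for $G=C_5\cup 100\,C_3$ the other components alone would contribute $200$ vertices of degree $1$, while every admissible global target has middle coordinate $102$ (here $n=305$); the residue demanded of $C_5$ would have a negative middle entry, your nonnegativity check fails, and the plan offers no fallback. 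Any repair must coordinate choices across all components (e.g., let most short cycles take the extreme triples $(m,0,0)$ or $(0,0,m)$ and only a bounded number take triples with positive middle coordinate), i.e., a global counting argument rather than a single absorber cycle. The same global argument is what your deferred ``short case analysis'' for $kC_3\cup\ell C_4$ actually requires, since $k$ and $\ell$ are unbounded and, unlike in the cubic case, a pair of isomorphic components ($2C_3$ or $2C_4$) does not admit a perfectly balanced decomposition, so the pairing trick from Theorem~\ref{thm:general} is unavailable. (A minor side point: your explanation of the exception $2C_3$ is imprecise --- the target $(2,2,2)$ need not split as $(1,1,1)+(1,1,1)$; the obstruction is that one of the two triangles would have to carry middle coordinate $0$, hence realize $(3,0,0)$ or $(0,0,3)$, whose extreme entry $3$ is out of tolerance.)

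For comparison, the paper's sketch sidesteps this bookkeeping with a single global construction: it checks $n\le 8$ directly, and for larger $n$ builds a subgraph $H'$ with the prescribed number $n_2\le n-4$ of $2$-vertices and at most four $1$-vertices by taking consecutive edges along the cycles (with at most one jump between components), then adds isolated edges to reach the required even number of $1$-vertices. If you replace your one-absorber-cycle step by such a global selection rule (or by an explicit rule specifying how many components of each length take extreme versus non-extreme triples), your cycle-triple lemma would yield a complete proof; as written, the disconnected case is not established.
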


\begin{proof}
\emph{(Sketch)} 
	It is straightforward to verify the assertion for $n\le 8$.
	In the remaining cases it is easy to note that one may effortlessly 
	find a subgraph $H'$ of $G$ with any fixed number $n_2\le n-4$ of $2$-vertices 
	and at most $4$ vertices of degree $1$ 
	(basically by including in $H'$ all consecutive edges around subsequent cycles 
	making up $G$ as long as necessary, with one possibly indispensable 
	jump to a different component at the end). 
	We then may easily supplement $H'$ with isolated edges to obtain a necessary 
	(even) number of $1$-vertices in the resulting $H$.
\end{proof}

\paragraph{Acknowledgement.} 
Results presented in this paper were obtained during a research visit 
of the first and the third author at the AGH University in Krakow,
and completed during the 6th Workshop on Graph Colourings in Pilsen.
B.~Lu\v{z}ar was supported by the 
Slovenian Research and Innovation Agency Program P1--0383, and the projects J1--3002 and J1--4008.
R. Sot\'{a}k was supported by the Slovak Research and Development Agency under the contracts No. APVV--19--0153 and APVV--23--0191, 
and the VEGA Research Grant 1/0574/21.
	
\bibliographystyle{plain}
{
	\bibliography{References}

\begin{thebibliography}{10}

\bibitem{AddAldDalRee05}
L.~Addario-Berry, R.~E.~L. Aldred, K.~Dalal, and B.~A. Reed.
\newblock {Vertex colouring edge partitions}.
\newblock {\em J. Combin. Theory Ser. B}, 2(94):237--244, 2005.

\bibitem{AddDalMcDReeTho07}
L.~Addario-Berry, K.~Dalal, C.~McDiarmid, B.~A. Reed, and A.~Thomason.
\newblock {Vertex-Colouring Edge-Weightings}.
\newblock {\em Combinatorica}, 1(27):1--12, 2007.

\bibitem{AddDalRee08}
L.~Addario-Berry, K.~Dalal, and B.A. Reed.
\newblock {Degree Constrained Subgraphs}.
\newblock {\em Discrete Appl. Math.}, 7(156):1168--1174, 2008.

\bibitem{AloWei23}
N.~Alon and F.~Wei.
\newblock {Irregular subgraphs}.
\newblock {\em Combin. Probab. Comput.}, 32(2):269--283, 2023.

\bibitem{ChaJacLehOelRuiSab88}
G.~Chartrand, M.S. Jacobson, J.~Lehel, O.R. Oellermann, S.~Ruiz, and F.~Saba.
\newblock {Irregular networks}.
\newblock {\em Congr. Numer.}, 64:197--210, 1988.

\bibitem{FauLeh87}
R.~J. Faudree and J.~Lehel.
\newblock {Bound on the irregularity strength of regular graphs}.
\newblock {\em Colloq. Math. Soc. Ja\'nos Bolyai}, 52:247--256, 1987.

\bibitem{FoxLuoPha24}
J.~Fox, S.~Luo, and H.~T. Pham.
\newblock {On random irregular subgraphs}.
\newblock {\em Random Struct. Alg.}, 64:899--917, 7 2024.

\bibitem{Frieze}
A.~Frieze, R.J. Gould, M.~Karo\'{n}ski, and F.~Pfender.
\newblock {On Graph Irregularity Strength}.
\newblock {\em J. Graph Theory}, 2(41):120--137, 2002.

\bibitem{MaXie24}
J.~Ma and S.~Xie.
\newblock {Finding irregular subgraphs via local adjustments}, 2024.
\newblock ArXiv preprint no. 2406.05675.

\bibitem{AsymptoticIrregStrReg}
J.~Przyby{\l}o.
\newblock {Asymptotic confirmation of the {F}audree-{L}ehel conjecture on
  irregularity strength for all but extreme degrees}.
\newblock {\em J. Graph Theory}, 100:189--204, 2022.

\bibitem{PrzybyloWeiLong}
J.~Przyby{\l}o and F.~Wei.
\newblock {On the asymptotic confirmation of the {F}audree-{L}ehel conjecture
  for general graphs}.
\newblock {\em Combinatorica}, 43:791--826, 2023.

\bibitem{PrzybyloWeiShort}
J.~Przyby{\l}o and F.~Wei.
\newblock {Short proof of the asymptotic confirmation of the Faudree-Lehel
  Conjecture}.
\newblock {\em Electron. J. Combin.}, 4(30):\#P4.27, 2023.

\end{thebibliography}
}

\end{document}